\title{\vspace{-40 pt} \textbf{{A Parallelized Cutting-Plane Algorithm for Computationally Efficient Modelling to Generate Alternatives}} }
\author{Michael Lau, Filippo Pecci, Jesse D. Jenkins}
\newtheorem{theorem}{Theorem}
\definecolor{dkgreen}{rgb}{0,0.6,0}
\definecolor{gray}{rgb}{0.5,0.5,0.5}
\definecolor{mauve}{rgb}{0.58,0,0.82}
\tiny\color{gray},
\begin{document}
\maketitle
\begin{abstract}
Contemporary macro-energy systems modelling is characterized by the need to represent strategic and operational decisions with high temporal and spatial resolution and represent discrete investment and retirement decisions. This drive towards greater fidelity, however, conflicts with a simultaneous push towards greater model representation of inherent complexity in decision-making, including methods like Modelling to Generate Alternatives (MGA). MGA aims to map the feasible space of a model within a cost slack by varying investment parameters without changing the operational constraints, a process which frequently requires hundreds of solutions. For large, detailed energy system models this is impossible with traditional methods, leading researchers to reduce complexity with linearized investments and zonal or temporal aggregation. This research presents a new solution method for MGA-type problems using cutting-plane methods based on a tailored reformulation of Bender’s Decomposition. We accelerate the algorithm by sharing cuts between MGA master problems and grouping MGA objectives. We find that our new solution method consistently solves MGA problems times faster and requires less memory than existing monolithic Modelling to Generate Alternatives solution methods on linear problems, enabling rapid computation of a greater number of solutions to highly resolved models. We also show that our novel cutting-plane algorithm enables the solution of very large MGA problems with integer investment decisions.
\end{abstract}

\newpage
\section{Introduction} \label{intro}
\paragraph{} {Capacity expansion models (CEM) are optimization-based models which represent the economic capacity procurement and operations of energy systems for a given time period under different sets of assumptions, including engineering, economic, policy, and availability constraints. These models can be used in a variety of contexts to provide decision support for the planning of energy infrastructure. Examples of these insights can include visualizing the roles different energy technologies play in the energy system or how policy levers change system-wide economic incentives \cite{Ricks2022,Lombardi2020}. In recent years, the requirements these models must meet have become more demanding. Energy systems are becoming increasingly complex and coupled as electrification creates strong interconnections across sectors, weather-dependent variable renewable resources play a central role, and climate change induces extreme events. Representing these systems accurately results in very large linear or mixed-integer optimization problems, with hundreds of millions variables and constraints, which are computationally intensive \cite{vanOuwerkerk2022}. }
\paragraph{}{CEMs are formulated as least-cost optimization problems, which represent likely outcomes of efficient markets and are useful for indicative planning \cite{Ricks2022,Ricks2024,Manocha2024,Luo2024,Brinkerink2024,Bistline2023}. But least-cost optimization misses the much wider feasible region of near-optimal solutions. Prior work has shown that within a small budget relaxation, CEMs have considerable flexibility to change the composition of resources and/or the location of siting decisions, producing very different impacts along a wide range of outcomes of interest \cite{Lombardi2020,Pickering2022,Sasse2019}. Furthermore, CEMs often omit preferences or constraints important to specific stakeholders, including land-use preferences (i.e. minimizing culturally significant or biodiverse land usage), equity considerations, or tax revenue. }
\paragraph{} {Modelling to Generate Alternatives (MGA) is a method to explore the near-optimal feasible region of a convex optimization problem \cite{Brill1979, DeCarolis2011, Trutnevyte2016, DeCarolis2017,Lau2024}. It has been applied previously in energy systems CEMs to offer a wider range of feasible solutions and explore key trade-offs \cite{Lombardi2020,Pedersen2023,Pickering2022,Sasse2019}. MGA is typically carried out by first solving a least-cost optimization problem, then repeatedly re-optimizing with varying objective vectors, subject to a budget constraint limiting the relative increase in cost compared to the least-cost solution~\cite{Lau2024}. }
\paragraph{}{The key challenge is that CEMs are already very large-scale, and MGA requires the computation of hundreds of solutions to these large models for a given scenario \cite{Pedersen2023,Lau2024, Pickering2022}.  Recent work has reduced solution time for least-cost mixed-integer CEMs using Benders decomposition \cite{Jacobson2024, Pecci2024}. These methods iterate between solving a planning problem and a number of parallelized operational subproblems. Each subproblem optimizes system's operation over a short period of time (e.g., a week), while the planning problem optimizes investment decisions as well as time-coupling variables that link operation across multiple periods (e.g., weekly CO$_2$ emission budgets.) At each iteration, the solutions of the parallelized sub-problems are used to generate cuts (i.e., linear inequalities) that are added to the planning problem to improve its approximation of system's operating costs. Importantly, the validity of cuts generated by a Benders iteration only depends on the formulation of the subproblem. This property offers an opportunity to accelerate MGA in the context of CEMs, where we iteratively re-optimize models that differ only in the master problem. However, MGA problems involve a characteristic, complicating "budget constraint", which ties together planning and operational costs, requiring a tailored decomposition formulation.}
\paragraph{}{A substantial body of literature exists on utilizing decomposition methods to efficiently solve multi-objective problems. \textcite{Kagan1993} apply Benders decomposition to multi-objective electricity distribution problems, utilizing an $\epsilon-$constraint method to explore the solution space. However, work in \textcite{Kagan1993} is focused on $\epsilon-$constraints that consider either planning or operational decisions, not both at the same time. As a result, these constraints can be included either in the planning problem or operational sub-problems within a standard Benders decomposition algorithm. In MGA-type problems, we consider $\epsilon-$constraints that combine planning and operational decisions, requiring tailored decomposition schemes to deal with the resulting complicating constraints.
\textcite{Mardan2019} applied Benders decomposition to bi-objective supply chain problems but do not make any modifications to a standard Benders Decomposition in the multi-objective setting. Instead, they repeat Benders Decomposition naively while varying the weights between objectives. This approach neither includes MGA's complicating $\epsilon$-constraints nor takes advantage of repeated solution problem structure as we do. Additionally, \textcite{Raith2024} demonstrated a solution method using Benders decomposition to solve bi-objective linear programs. \textcite{Raith2024} rely on a simplex approach which does not scale well to higher numbers of objectives making it unsuitable for MGA. It also does not consider $\epsilon$-constraints, which are a defining feature of MGA. While Benders decomposition approaches have been shown to work well for standard capacity expansion problems, they have not yet been proven to work for MGA-type problems. Simultaneous to this work, \textcite{Bienstock2025} demonstrate that cutting planes can be used to warm-start repeated solutions of ACOPF problems with variations in input data. Also simultaneous to this work, \textcite{Viens2025} introduce a generalized algorithmic concept for extracting alternative solutions from Benders decomposition. They do not provide specific formulations or algorithms applied to MGA-type problems, nor do they provide computational testing.}
\paragraph{}{In this paper, we present a parallelizable cutting-plane algorithm for generating alternative, near-optimal solutions for large-scale capacity expansion problems which is faster and requires less memory than traditional monolithic solution methods. Our method decomposes system operation into small operational sub-problems that can be optimized in parallel. We accelerate convergence by sharing cuts between different MGA optimizations and partitioning the MGA space to improve the quality of cut information. We apply and demonstrate these methods on a large-scale, open-source, linear capacity expansion model and show a significant speed increase over a monolithic implementation for most problem scales. Furthermore, we demonstrate that the novel algorithm enables the solution of large-scale MGA problems with integer investment variables, a class of problems which are intractable with current methods. Applying these novel methods enables more extensive exploration of the near-optimal feasible region of capacity expansion models and similarly structured problems while retaining high model resolution and integer investment representation, thus improving the quality of decision support provided to stakeholders and decision makers.}
\paragraph{}{In Section \ref{Sec2}, we describe the problem formulation of a decomposed MGA problem and demonstrate that it is equivalent to the monolithic MGA problem. We describe methods for the sharing of resulting cuts and how solving multiple MGA iterations can allow our tailored cutting-plane algorithm to initialize with cuts from previously computed solutions. We introduce several strategies for sharing cuts and introduce a strategy for spatially partitioning MGA vectors to make retained cuts more relevant. In Section \ref{sec:numexp}, we compare the performance of the novel cutting-plane algorithm to existing parallelized monolithic solution methods on MGA capacity expansion problems with varying levels of abstraction and test the efficacy of each cut sharing method described in Section \ref{Sec2}. Finally we discuss the contributions of this paper and conclude in Section \ref{conc}.}
\section{Methodology}\label{Sec2}
\paragraph{}{In this Section, we propose a tailored cutting-plane method resulting in a parallelized, computationally efficient, MGA algorithm for energy system capacity expansion problems. First, in subsection \ref{mono}, we provide a brief review of monolithic MGA schemes. Then, in subsection \ref{sec:BendersMGA}, we discuss an existing Benders decomposition solution algorithm and introduce an equivalent reformulation of the MGA problem, whose structure is exploited to define the novel parallelized cutting-plane method. We continue by discussing suitable stopping criteria for the cutting-plane method and describing the cutting-plane MGA solution algorithm. Finally, in subsection \ref{sec:CutSelection}, we introduce the concept of sharing cuts between MGA iterates to improve runtime.}
\subsection{Standard Modelling to Generate Alternatives}\label{mono}
\paragraph{} {In this study, we consider planning models where both planning and operational decisions are optimized over a set of operational periods (e.g. weeks) $p \in P$. These models are formulated as the following Linear Program (LP):
\begin{equation}
    \label{eq:LP}
    \begin{alignedat}{3}
        &\text{minimize}&\; \; &\bm{c}^T\bm{x}+\sum_{p \in P} \bm{d}_p^T \bm{y}_p\\
        &\text{subject to}& & \bm{A}_p\bm{x} + \bm{B}_p\bm{y}_p \leq \bm{b}_p, \quad \forall p \in P\\
        &&& \bm{y}_p \in Y_p,\quad \forall p \in P\\
        &&& \bm{x} \in X
    \end{alignedat}
\end{equation}
where $\bm{x} \in \mathbb{R}^n$ represents the set of planning decisions and $\mathbf{y}_p \in \mathbb{R}^m$ represents operational decisions in operational period $p \in P$. Vectors of parameters $\mathbf{c}\in \mathbb{R}^n$ and  $\mathbf{d}_p\in \mathbb{R}^m$ represent fixed and variable costs, respectively. For each operational period, $\mathbf{A}_p\in \mathbb{R}^{r \times n}$ and $\mathbf{B}_p\in \mathbb{R}^{r\times m}$ are constraint coefficient matricies, and $\mathbf{b}_p \in \mathbb{R}^r$ is the vector of right hand side coefficients. Finally, polyhedral sets $X \subseteq \mathbb{R}^n$ and $Y_p\subseteq \mathbb{R}^m$ represent linear constraints on planning and operational decisions, respectively. In the following, we refer to a solution of Problem~\eqref{eq:LP} as a \emph{least-cost solution}, which minimizes the objective function representing total system cost.}
\paragraph{}{Modelling to Generate Alternatives (MGA) aims to compute alternative feasible solutions with total costs within a specified budget \cite{Brill1979}. In the context of CEMs the primary focus is investment planning, thus we consider an MGA formulation designed to generate alternative investment portfolios \cite{DeCarolis2011}. 
To achieve this, MGA modifies the least-cost planning problem~\eqref{eq:LP} as follows. First, total system cost, originally the objective to be minimized, is constrained to be bounded by a value $\epsilon$, indicating the maximum acceptable expenditure above least-cost. All other constraints and variables are unchanged from the original Problem~\eqref{eq:LP} \cite{DeCarolis2011, Trutnevyte2013}. Second, the objective function is set to a weighted sum of variables of interest. The variables may be any variables from the problem, but are often planning variables. The weights, $\mathbf{w}\in\mathbb{R}^n$ may be strategically selected but are often randomized \cite{Lau2024}. These weighted objective functions change the optimal solutions of the problem and encourage the exploration of the feasible set of Problem~\eqref{eq:LP} in different directions. A typical MGA formulation can be written as:
\begin{equation}
    \label{eq:MMGA}
    \begin{alignedat}{3}
        &\text{minimize}&\; \; &\bm{w}^T\bm{x}\\
        &\text{subject to}& & \bm{c}^T\bm{x}+\sum_{p \in P} \bm{d}_p^T \bm{y}_p \leq \epsilon \\
        &&& \bm{A}\bm{x} + \bm{B}_p\bm{y}_p \leq \bm{b}_p, \quad \forall p \in P\\
        &&& \bm{y}_p \in Y_p,\quad \forall p \in P\\
        &&& \bm{x} \in X
    \end{alignedat}
\end{equation}
In the MGA algorithm, we first find a least-cost solution $(\bm{x}^*,(\bm{y}_p^*)_{p \in P})$ solving Problem~\eqref{eq:LP}, to determine the desired value of the cost budget: 
\begin{equation}
\epsilon = (1+\beta)\Big(\bm{c}^T\bm{x}^* + \sum_{p \in P}{\bm{d}_p^T\bm{y}_p^*}\Big),
\end{equation}
where slack parameter $\beta>0$ is the maximum acceptable relative increase in budget compared to the least-cost.
Then, any MGA iterates with varying objective weights are solved using the formulation in~\eqref{eq:MMGA} \cite{Lau2024, Trutnevyte2013, DeCarolis2011, Pedersen2021}.}
\paragraph{}{Computational experiments based on a 12-zone continental US electricity system planning problem with one planning stage and 8736 operational hours (an LP with 14.3 million variables and 25.6 million constraints) presented in Section~\ref{sec:numexp} suggest that the MGA Problem~\eqref{eq:MMGA} is more difficult to solve than the corresponding least-cost Problem~\eqref{eq:LP} when the budget constraint is relatively tight. To illustrate this, we report the convergence performance of the commercial solver Gurobi~\cite{GurobiOptimizationLLC2024}, when implemented to solve two examples of Problems~\eqref{eq:LP} and~\eqref{eq:MMGA} using the Barrier algorithm.}
\paragraph{}{In particular, Figure~\ref{fig:Primal_feas} reports the solver's progress in reducing the feasibility residual, which measures how close the current guess is to satisfying all constraints (i.e., being feasible). We observe that when applied to the MGA problem~\eqref{eq:MMGA} with a strict budget constraint using $\beta = 0.1$, the barrier algorithm converges much more slowly compared to the least-cost problem~\eqref{eq:LP} or an MGA problem with a looser budget constraint with $\beta = 2.0$. This performance can be explained by noting that the tighter cost budget constraint in Problem~\eqref{eq:MMGA} significantly restricts the feasible space to be close to the least-cost solution of Problem~\eqref{eq:LP}. Hence, when solving Problem~\eqref{eq:MMGA}, the barrier method iterates close to the boundary of its feasible set, which shortens step length and slows convergence of the algorithm. }
\begin{figure}[H]
    \centering
    \includegraphics[width=\linewidth]{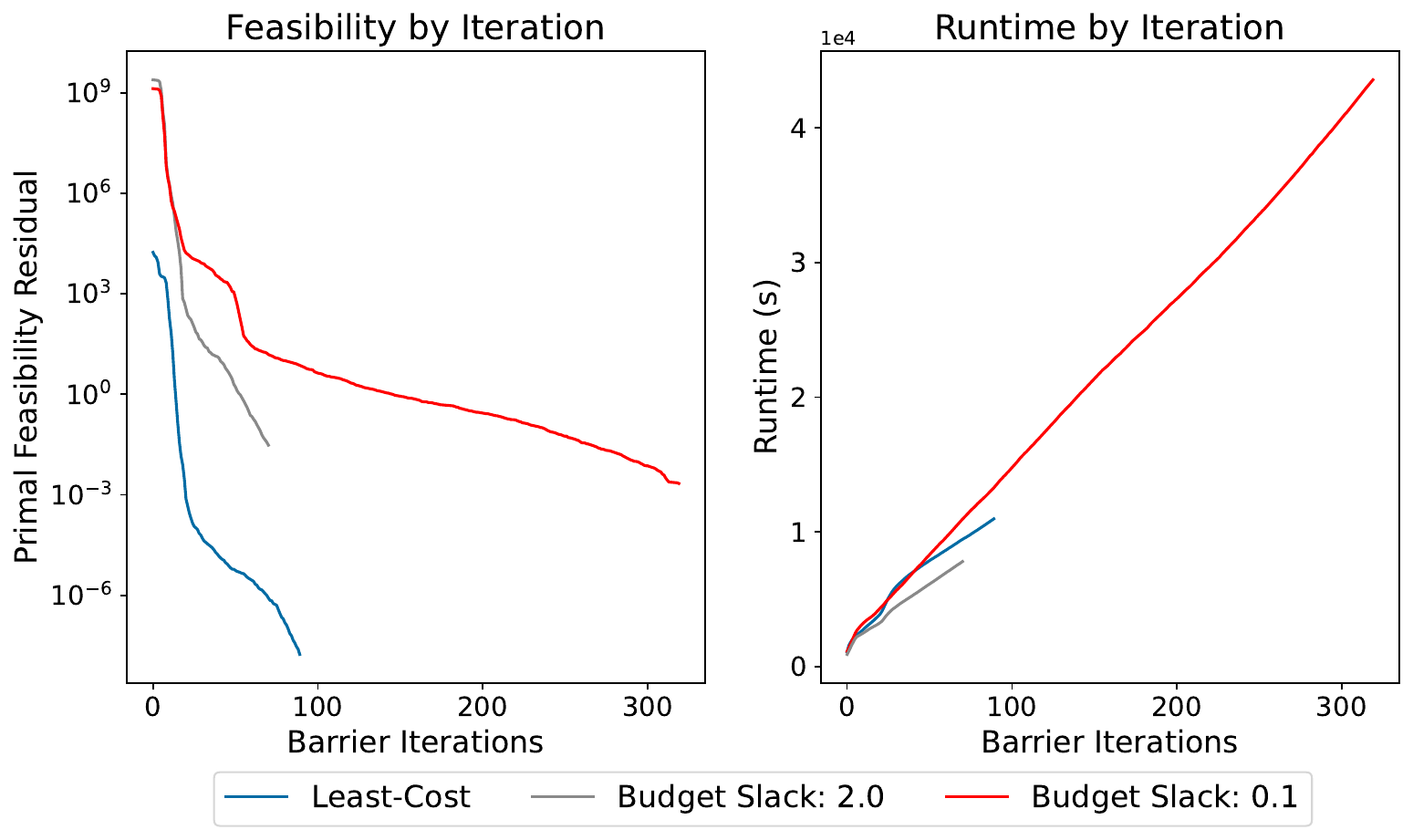}
    \caption{Convergence of primal feasibility in numerical examples for a 12-zone, continental US electricity system planning problem with one planning stage and 8,736 operational hours (an LP with 14.3 million variables, 25.6 million constraints)  with least-cost and MGA formulations with $\beta =0.1$ and $2.0$. The MGA problem with the tighter budget converges much more slowly, indicating increased computational effort relative to least-cost solutions or loose budget constraints.}
    \label{fig:Primal_feas}
\end{figure}
\paragraph{}{In order to extensively explore the near-optimal space of planning decisions, it is necessary to solve several hundred versions of Problem~\eqref{eq:MMGA} with different objective function weights \cite{Lau2024}. Hence, as shown by the experiments reported in Section~\ref{sec:numexp}, the computational capabilities of off-the-shelf optimization solvers represent a major bottleneck for the application of MGA to large planning problem with high temporal resolution and engineering detail.
As we discuss in the following, decomposition and cutting-plane methods offer a scalable approach for solving MGA problems by taking advantage of distributed computing resources through parallelization.}
\subsection{Parallelized Cutting-Plane Algorithm to Generate Alternatives} \label{sec:BendersMGA}
\paragraph{}{MGA analysis is initialized by computing a solution of the least-cost problem~\eqref{eq:LP}. Decomposition algorithms based on cutting-plane methods~\cite{Nesterov2004} (e.g., Benders Decomposition) offer scalable solution approaches to exploit the structure of Problem~\eqref{eq:LP} \cite{Jacobson2024,Pecci2024,Rahmaniani2017}. Here, we briefly summarize the key steps of a Benders decomposition algorithm applied to solve Problem~\eqref{eq:LP}. At iteration $k \geq 1$, for all $p \in P$, solve the cut-generating sub-problem:
\begin{equation}
    \label{eq:operations_sub}
    \begin{alignedat}{3}
        f_p(\bm{x}^k)=&\text{minimize}&\; \; &\bm{d}_p^T \bm{y}_p\\
        &\text{subject to}& & \bm{A}_p\bm{x} + \bm{B}_p\bm{y}_p \leq \bm{b}_p\\
        &&& \bm{y}_p \in Y_p\\
        &&& \bm{x}=\bm{x}^k \quad (\bm{\lambda}_p^k)
    \end{alignedat}
\end{equation}
In this study, we assume that suitable slack variables and penalty terms are included in Problem~\eqref{eq:operations_sub} to ensure it is always feasible. An upper bound on the optimal value of Problem~\eqref{eq:LP} is given by \begin{equation}
    \label{eq:ub}
    \text{UB}^k = \min_{j=1,\ldots,k}\sum_{p\in P}f_p(\bm{x}^j).
\end{equation}
Using the solution of the sub-problems at all previous iterations, define a piecewise linear approximation of $f_p(\cdot)$:
\begin{equation}
\label{eq:piecewise_app}
    f_p(\bm{x}) \geq \max_{j=1,\ldots,k} \Big(f_p(\bm{x}^j) + (\bm{x}-\bm{x}^j)^T\bm{\lambda}^j_p\Big)
\end{equation}
where $\bm{\lambda}^j_p$ is the vector of Lagrangian multipliers obtained solving Problem~\eqref{eq:operations_sub}. Each segment of the piecewise approximation in Equation~\eqref{eq:piecewise_app} is referred to as a \emph{cut}. A lower bound on the optimal value of Problem~\eqref{eq:LP} is obtained minimizing the piecewise linear approximation:
\begin{equation}
    \label{eq:app_least_cost_master}
    \begin{alignedat}{3}
        \text{LB}^k = &\text{minimize}&\; \; &\bm{c}^T\bm{x}+\sum_{p \in P}\theta_p\\
        &\text{subject to}& & \theta_p \geq f_p(\bm{x}^j) + (\bm{x}-\bm{x}^j)^T\bm{\lambda}^j_p \quad \forall j=1,\ldots,k \\
        &&& \bm{x} \in X
    \end{alignedat}
\end{equation}
where $\theta_p$ is an auxiliary variable that approximates the value of $f_p(\bm{x})$, for every operational period $p\in P$. Then, if the optimality gap $(\text{UB}^k-\text{LB}^k)/\text{LB}^k$ is smaller than a given convergence tolerance, the algorithm stops. Otherwise, we select a new iterate $\bm{x}^{k+1}$ and proceed to the next iteration. }
\paragraph{}{If investment variables $\bm{x}$ are constrained to be integers, two additional steps are required. The problem is first solved with linear investment variables. Once the linear relaxation is solved to convergence, the integer investment constraints are reintroduced, $\text{UB}$ and $\text{LB}$ are reset, and the problem is solved to convergence again. This process is described in \textcite{Pecci2024}. The implementation of a Benders decomposition algorithm to solve Problem~\eqref{eq:LP} is summarized in Algorithm~\ref{alg:least_cost_benders}.}
\begin{algorithm}[H]
    \caption{Benders Decomposition for Least-Cost Problem}\label{alg:least_cost_benders}
    \begin{algorithmic}
        \Require $\delta_{\text{LS}}\geq 0$, $K_{\text{LS}}\geq 1$
        \State{Define $\bm{x}^{1}$ as the solution of Problem~\eqref{eq:app_least_cost_master} with no cuts}
        \State{Define $\text{INT}$ as false, relax integrality constraints}
            \For {$k = 1,\dots,K_{\text{LS}}$}
            \For{$p \in P$, in parallel}
                \State{Solve cut-generating sub-problem~\eqref{eq:operations_sub} and compute $f_p(\bm{x}^k)$ and $\bm{\lambda}^k_p$}
            \EndFor
            \State{Define $\text{UB}^k$ as in~\eqref{eq:ub} and update cuts}
            \State{Solve Problem~\eqref{eq:app_least_cost_master} to compute $\text{LB}^k$}
            \If{$(\text{UB}^k-\text{LB}^k)/\text{LB}^k \leq \delta_{\text{LS}}$}
            \If{$\text{INT}$ == false}
            \State{Add integrality constraints, reset $\text{UB} = \text{inf}$, set $\text{INT}$ to true }
            \Else
            \State{Terminate with the planning solution corresponding to $\text{UB}^k$}
            \EndIf
            \Else 
            \State{Select new iterate $\bm{x}^{k+1}$ and go to next iteration}
            \EndIf
            \EndFor
    \end{algorithmic}
\end{algorithm}
\paragraph{}{The selection of the next iterate $\bm{x}^{k+1}$ can follow different strategies. In standard Benders decomposition schemes based on the Kelley's method \cite{KelleyJr1960}, $\bm{x}^{k+1}$ is defined as the solution of Problem~\eqref{eq:app_least_cost_master}. However, this method is known to suffer from instabilities, and alternative techniques have been proposed to regularize Benders decomposition~\cite{Ruszczyski1986,Lemarchal1995,Nesterov2004}. The investigation of regularization techniques for Benders decomposition algorithms is beyond the scope of this work, and we refer the interested reader to \cite{Zverovich2012} for general LPs, and~\cite{Gke2024,Pecci2024} for applications to energy system planning problems.}
\paragraph{}{In the remainder of this paper, we assume that the least-cost Problem~\eqref{eq:LP} has been solved using Algorithm~\ref{alg:least_cost_benders}. Let $\bm{x_{\text{LS}}}^*$ be the least-cost solution computed by the Benders decomposition algorithm. We define the cost budget as:
\begin{equation}
\epsilon = (1+\beta)\Big(\bm{c}^T\bm{x_{\text{LS}}}^* + \sum_{p \in P}f_p(\bm{x_{\text{LS}}}^*)\Big)
\end{equation}
where $\beta \in [0,1]$ is a slack parameter representing the percentage increase in system cost that we are willing to tolerate. We also assume that multiple MGA vectors of weights $\bm{w} \in \mathcal{W}$ have been identified based on one of the available techniques published in previous MGA literature \cite{Lau2024, DeCarolis2011}. We need to solve Problem~\eqref{eq:MMGA} for each $\bm{w}\in\mathcal{W}$. However, the MGA Problem~\eqref{eq:MMGA} presents a different structure from Problem~\eqref{eq:LP}, where operational periods are coupled by the budget constraint rather than the objective function. Hence, Algorithm~\ref{alg:least_cost_benders} can not be directly applied to the MGA problem. In the following we derive a cutting-plane method for solving Problem~\eqref{eq:MMGA}.}
\paragraph{}{First, we consider an equivalent reformulation of the MGA Problem~\eqref{eq:MMGA} given by:
\begin{equation}
    \label{eq:master}
    \begin{alignedat}{3}
        &\text{minimize}&\; \; &\bm{w}^T\bm{x}\\
        &\text{subject to}& & \bm{c}^T\bm{x}+\sum_{p \in P}f_p(\bm{x}) \leq \epsilon \\
        &&& \bm{x} \in X
    \end{alignedat}
\end{equation}
The following Theorem holds.}
\begin{theorem}
    Problems~\eqref{eq:MMGA} and \eqref{eq:master} are equivalent. In particular:
    \begin{enumerate}
        \item Let ($\mathbf{x}^*, \mathbf{y}^*_{p \in P}$) be an optimal solution of Problem \ref{eq:MMGA}. Then, $\mathbf{x}^*$ is optimal solution of Problem \ref{eq:master}
    
        \item Let $\bm{\hat{x}}$ be an optimal solution of Problem~\eqref{eq:master}. Then, we have that ($\bm{\hat{x}}, \bm{\hat{y}}_{p \in P}$) is optimal solution of Problem \eqref{eq:MMGA}, where:
        \begin{equation*}
            \bm{\hat{y}}_{p} \in \text{argmin}\Big(\bm{d}_p^T \bm{y}_p \; | \; \bm{A}_p\bm{x} + \bm{B}_p\bm{y}_p \leq \bm{b}_p, \: \bm{y}_p \in Y_p \Big)\quad \forall p \in P
    \end{equation*}
    \end{enumerate}
\end{theorem}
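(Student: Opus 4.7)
The plan is to prove equivalence by showing that the feasible region of Problem~\eqref{eq:master}, viewed in $\bm{x}$-space, is exactly the projection of the feasible region of Problem~\eqref{eq:MMGA} onto $\bm{x}$, and then to exploit the fact that the objective $\bm{w}^T\bm{x}$ depends only on $\bm{x}$. Once both projections coincide, the two problems have the same optimal value and the same set of optimal $\bm{x}$'s, while any optimal $\bm{x}$ can be extended to a full MGA solution by appending minimizers of the operational sub-problem. I will treat feasibility of the sub-problems as given, since the paper has already stipulated that slack variables and penalty terms are added so that $f_p(\bm{x})$ is well-defined and attained for every $\bm{x}\in X$.

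For part~1, I would start from an optimal $(\bm{x}^*,(\bm{y}_p^*)_{p\in P})$ of Problem~\eqref{eq:MMGA} and observe that each $\bm{y}_p^*$ is feasible for the optimization defining $f_p(\bm{x}^*)$, so $f_p(\bm{x}^*)\le \bm{d}_p^T\bm{y}_p^*$. Summing over $p$ and combining with the MGA budget constraint immediately yields $\bm{c}^T\bm{x}^*+\sum_{p\in P}f_p(\bm{x}^*)\le \epsilon$, showing $\bm{x}^*$ is feasible for Problem~\eqref{eq:master}. Optimality then follows by contradiction: any $\bm{x}'\in X$ strictly improving the master objective could be extended with $\bm{y}_p' \in \operatorname{argmin}\{\bm{d}_p^T\bm{y}_p \mid \bm{A}_p\bm{x}'+\bm{B}_p\bm{y}_p\le \bm{b}_p,\; \bm{y}_p\in Y_p\}$ to produce a feasible MGA tuple with strictly smaller $\bm{w}^T\bm{x}'$, contradicting optimality of $\bm{x}^*$.

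For part~2, I would take an optimal $\hat{\bm{x}}$ of Problem~\eqref{eq:master} and the stated $\hat{\bm{y}}_p$. By definition of $\hat{\bm{y}}_p$ the operational constraints and set memberships hold, and $\bm{d}_p^T\hat{\bm{y}}_p = f_p(\hat{\bm{x}})$, so the MGA budget constraint collapses to the master budget constraint, which holds by feasibility of $\hat{\bm{x}}$. Optimality in Problem~\eqref{eq:MMGA} then comes from part~1: if some $(\bm{x}'',(\bm{y}_p'')_{p\in P})$ were feasible for MGA with $\bm{w}^T\bm{x}'' < \bm{w}^T\hat{\bm{x}}$, the argument above would make $\bm{x}''$ feasible for the master with a smaller objective, contradicting optimality of $\hat{\bm{x}}$. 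The main thing to be careful about is not a deep obstacle but a bookkeeping point, namely that $f_p(\bm{x})$ must be attained for every $\bm{x}\in X$ so that the extension $\hat{\bm{y}}_p$ exists and the substitution $\bm{d}_p^T\hat{\bm{y}}_p = f_p(\hat{\bm{x}})$ is legitimate; this is guaranteed by the slack/penalty assumption on Problem~\eqref{eq:operations_sub} noted earlier in the paper.
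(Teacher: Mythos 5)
Your proposal is correct and follows essentially the same route as the paper: both establish that the $\bm{x}$-feasible sets of Problems~\eqref{eq:MMGA} and~\eqref{eq:master} coincide (via $f_p(\bm{x}^*)\le \bm{d}_p^T\bm{y}_p^*$ in one direction and the argmin extension in the other) and then transfer optimality using the shared objective $\bm{w}^T\bm{x}$. Your explicit remark that attainment of $f_p$ is guaranteed by the slack/penalty assumption is a minor point the paper leaves implicit, but it does not change the argument.
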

\begin{proof} We first show that the two problems define the same feasible solutions. Let ($\mathbf{x}^*, \mathbf{y}^*_{p \in P}$) be a feasible solution for Problem~\eqref{eq:MMGA}. Since it satisfies the cost budget constraint in Problem~\eqref{eq:MMGA}, the definition of function $f_p(\cdot)$ implies:
        \begin{equation}
            \bm{c}^T\bm{x}^*+\sum_{p \in P}f_p(\bm{x}^*) \leq \bm{c}^T\bm{x}^*+\sum_{p \in P}\bm{d}_p^T\bm{y}^*_p\leq \epsilon
        \end{equation}
Hence, $\bm{x}^*$ satisfies the cost budget constraint in Problem~\eqref{eq:master}. Because all other constraints in Problem~\eqref{eq:master} are the same as Problem~\eqref{eq:MMGA}, we conclude that $\bm{x}^*$ is feasible for Problem~\eqref{eq:master}. 
Now, let $\bm{\hat{x}}$ be a feasible solution for Problem\eqref{eq:master}.
By definition of function $f_p(\cdot)$, there exists $\bm{\hat{y}}_{p}\in \text{argmin}\Big(\bm{d}_p^T \bm{y}_p \; | \; \bm{A}_p\bm{\hat{x}} + \bm{B}_p\bm{y}_p \leq \bm{b}_p, \: \bm{y}_p \in Y_p \Big)$, for all $p \in P$, such that:
        \begin{equation}
             \bm{c}^T\bm{\hat{x}} +\sum_{p \in P}\bm{d}_p^T\bm{\hat{y}}_p = \bm{c}^T\bm{\hat{x}} +\sum_{p \in P}f_p(\bm{\hat{x}}) \leq \epsilon
        \end{equation}
Therefore, ($\bm{\hat{x}}, \bm{\hat{y}}_{p \in P}$) is feasible for Problem~\eqref{eq:MMGA}.

\vspace{1em} \noindent Since Problems~\eqref{eq:MMGA} and \eqref{eq:master} have the same objective functions, we conclude that an optimal solution $\bm{x}^*$ to Problem~\eqref{eq:MMGA} is optimal for Problem~\eqref{eq:master} and vice versa.
\end{proof}
\paragraph{}{We are now ready to introduce the cutting-plane method for solving Problem~\eqref{eq:MMGA}. Analogously to the least-cost Benders decomposition algorithm, at iteration $k\geq 1$, we solve the cut-generating sub-problems~\eqref{eq:operations_sub} where $\bm{x}=\bm{x}^{k}$ and check if the termination criterion is satisfied. If not, solve the approximated MGA problem, where $\theta_p$ is the piecewise linear approximation of the subproblem cost as a function of planning decisions:
\begin{equation}
    \label{eq:app_master}
    \begin{alignedat}{3}
        &\text{minimize}&\; \; &\bm{w}^T\bm{x}\\
        &\text{subject to}& & \bm{c}^T\bm{x}+\sum_{p \in P}\theta_p \leq \epsilon \\
        &&& \theta_p \geq f_p(\bm{x}^j) + (\bm{x}-\bm{x}^j)^T\bm{\lambda}^j_p \quad \forall j=1,\ldots,k \\
        &&& \bm{x} \in X,
    \end{alignedat}
\end{equation}
obtaining a new vector of planning decisions $\bm{x}^{k+1}$, and proceed to the next iteration. To determine a stopping criterion for the cutting-plane method, we note that the definition of the piecewise linear approximation~\eqref{eq:piecewise_app} implies that Problem~\eqref{eq:app_master} is a relaxation of~\eqref{eq:master}. Hence, the optimal value of the approximated Problem~\eqref{eq:app_master} provides a lower bound on the optimal value of the original Problem~\eqref{eq:master}. 
The generation of a valid upper bound for the MGA Problem~\eqref{eq:master} requires finding a planning solution that satisfies all constraints, including the nonlinear cost budget constraint. Let $w^*$ be the optimal value of Problem~\eqref{eq:master} and let $\bm{\hat{x}}\in X$ be a solution of the approximated MGA Problem~\eqref{eq:app_master}. }
\paragraph{}{If $\bm{\hat{x}}$ satisfies the MGA budget constraint, i.e. $\bm{c}^T\bm{\hat{x}}+\sum_{p \in P}f_p(\bm{\hat{x}}) \leq \epsilon$, then $\bm{\hat{x}}$ is feasible for Problem~\eqref{eq:master} and such that $\bm{w}^T\bm{\hat{x}} \leq w^*$. Hence, $\bm{\hat{x}}$ is an optimal solution of Problem~\eqref{eq:master}. As a consequence, we terminate the cutting-plane algorithm when the current solution of the Problem~\eqref{eq:app_master} approximately satisfies the nonlinear cost budget constraint.}
\paragraph{}{Similarly to the least-cost problem, we can solve problems with integer master problem decisions using a two-stage process. First, we relax the integrality constraints, resulting in a linear relaxation of the original problem. We solve that relaxation to convergence, then reset $\text{UB}$ and re-introduce the integrality constraints before re-solving to convergence with the MGA budget. The resulting algorithm is shown below in \ref{alg:cutting_plane}.}
\begin{algorithm}[H]
    \caption{Cutting-Plane iterations for solving MGA problem}\label{alg:cutting_plane}
    \begin{algorithmic}
        \Require $\bm{w}$, $\epsilon \geq 0$, $\delta_{\text{MGA}}\geq 0$, $K_{\text{MGA}}\geq 1$, $\text{INT}$ = false
            \State{Let $\bm{x}^1$ be the solution of the initial formulation of Problem~\eqref{eq:app_master}}
            \State{Relax integrality constraints}
            \For {$k = 1,\dots,K_{\text{MGA}}$}
            \For{$p \in P$, in parallel}
                \State{Solve cut-generating sub-problem~\eqref{eq:operations_sub} and compute $f_p(\bm{x}^k)$ and $\bm{\lambda}^k_p$}
            \EndFor
            \If{$\bm{c}^T\bm{x}^k + \sum_{p\in P}f_p(\bm{x}^k) \leq \epsilon(1+\delta_{\text{MGA}})$}
            \If{$\text{INT}$ == False}
            \State{Add integrality constraints, reset $\text{UB} = \text{inf}$, set $\text{INT}$ to true}
            \Else
            \State{ Terminate with solution $\bm{x}^k$}
            \EndIf
            \Else 
            \State{ Update and solve Problem~\eqref{eq:app_master} to compute new iterate $\bm{x}^{k+1}$}
            \EndIf
            
            \EndFor
    \end{algorithmic}
\end{algorithm}
\paragraph{}{The slack parameter $\beta$ in the budget constraint is somewhat arbitrary. In practice, convergence tolerance $\delta_{\text{MGA}}$ does not need to be excessively tight as the strict satisfaction of the cost budget constraint does not represent any structural or physical property of the system. In this paper, we set $\delta_{\text{MGA}}=0.5\%$ as we find that this value represents a good compromise between accuracy and computing time for the considered case study. }
\paragraph{}{Finally, we observe that the cut-generating sub-problems~\eqref{eq:operations_sub} are independent and can be solved in parallel in both Algorithms~\ref{alg:least_cost_benders} and~\ref{alg:cutting_plane}. As shown in Section~\ref{sec:numexp}, this allows us to exploit distributed computing resources to achieve unprecedented computational performance for MGA studies. We also note that these parallelized operational subproblems can be used to rapidly evaluate interior solutions generated via interpolation in post-processing, as in \textcite{Lau2024b}.}
\paragraph{}{Note that the cuts computed during the application of Algorithm~\ref{alg:least_cost_benders} to solve the least-cost Problem~\eqref{eq:LP} provide a lower bound on subproblem cost $f_p(\bm{y})$  as a function of planning decisions. Since the budget constraint in the MGA Problem~\eqref{eq:master} involves the same functions $f_p(\bm{y})$, those cuts can also be used to form the piecewise linear relaxation of the budget constraint in the approximated MGA Problem~\eqref{eq:app_master}. Hence, we can take advantage of the previous Benders decomposition iterations and initialize the approximate MGA Problem~\eqref{eq:app_master} with the cuts generated during Algorithm~\ref{alg:least_cost_benders} or prior iterations of Algorithm \ref{alg:cutting_plane}. As shown in Section~\ref{sec:numexp}, this significantly accelerates the convergence of the cutting-plane method. The overall parallelized MGA cutting-plane algorithm is presented in Algorithm~\ref{alg:MGA}.}
\begin{algorithm}[H]
    \caption{Parallelized Cutting-Plane To Generate Alternatives (CGA)}\label{alg:MGA}
    \begin{algorithmic}
        \Require $\beta \in [0,1]$, $\delta_{\text{LS}}\geq 0$, $K_{\text{LS}}\geq 1$, $\delta_{\text{MGA}}\geq 0$, $K_{\text{MGA}}\geq 1$
        \State{Implement Algorithm~\ref{alg:least_cost_benders} to solve the least-cost problem~\eqref{eq:LP}}
        \State{Let $\bm{x}_{\text{LS}}^*$ be the solution of the least-cost problem and define $$\epsilon = (1+\beta)\Big(\bm{c}^T\bm{x_{\text{LS}}}^* + \sum_{p \in P}f_p(\bm{x_{\text{LS}}}^*)\Big)$$}
        \State{Generate set of MGA weight vectors $\bm{w}\in \mathcal{W}$}
        \For {$\bm{w} \in \mathcal{W}$, parallelized}
            \State{Initialize Problem~\eqref{eq:app_master} with the cuts computed by Algorithm~\ref{alg:least_cost_benders}}
            \State{Implement Algorithm~\ref{alg:cutting_plane} to solve the MGA problem~\eqref{eq:master}}
            \State{Possibly reset cuts in Problem~\eqref{eq:app_master} based on the strategy from Section~\ref{sec:CutSelection}}
        \EndFor
    \end{algorithmic}
\end{algorithm}
\subsection{Cut sharing}\label{sec:CutSelection}
\paragraph{}{The standard Benders decomposition approach tends to suffer from instabilities which result in long solution times. Instabilities arise from a lack of sufficient information from subproblem cuts in early iterations of the master problem resulting in a very poor lower bound approximation of subproblem costs as a function of planning decisions. It follows that the resulting subproblem cuts are also uninformative, as their solutions assume unreasonable planning decisions far from the optimal solution. In the least-cost problem, we utilize a level-set regularization to avoid such instabilities \cite{Pecci2024}. Regularization moves master problem solutions into the interior of the space, which improves planning decisions when the subproblem cost as a function of planning decisions is not well known and thus limits instabilities, accelerating convergence. }
\paragraph{}{We observe a similar problem in the parallelized MGA cutting-plane method introduced in Section \ref{sec:BendersMGA}. Performing computational experiments on a 26-zone, 8736-hour example system with 30.1 million variables and 53.7 million constraints, we report the total system-cost convergence of the Cutting-Plane to Generate Alternatives (CGA) algorithm for an example MGA problem with no cuts retained from prior solutions in Figure \ref{fig:NoCuts}. We observe that the problem converges slowly, with total system costs an order of magnitude above the budget for many iterations. Additionally, we note that iteration time increases with each iteration. This well-documented behavior of cutting-plane methods is due to the row-generating nature of the algorithm which increases the size of the master problem at each iteration. In particular, our multi-cut Algorithm~\ref{alg:cutting_plane} adds $P$ cuts per iteration, which can exacerbate this phenomenon. }
\begin{figure}[H]
    \centering
    \includegraphics[width=\linewidth]{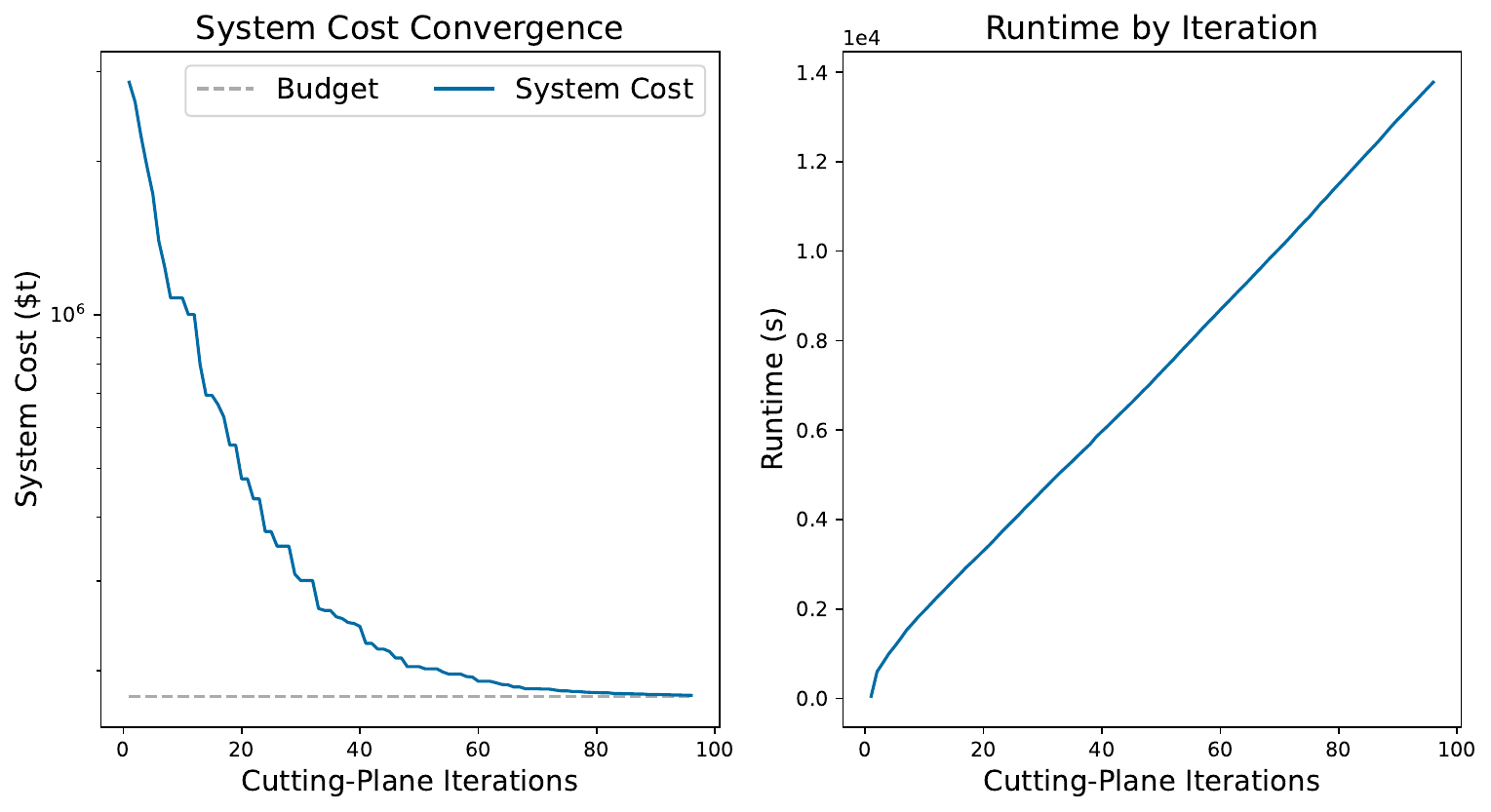}
    \caption{Convergence of Total System Cost in example solutions of 26-zone, 8736-hour MGA problem with 30.1 million variables and 53.7 million constraints solved with CGA and no cuts retained between iterations. The problem converges slowly, indicating a series of unrealistic master problem solutions. Solution time per iteration also increases, as cutting-plane methods increase master problem size at each iteration.}
    \label{fig:NoCuts}
\end{figure}

\paragraph{}{To solve the instability problem in the MGA context, we introduce a set of cut-sharing schemes. We note in Section \ref{sec:BendersMGA} that cuts generated during the least-cost solution define lower bounds for $f_p(\cdot)$ which remain valid in the context of the MGA master problem (\ref{eq:app_master}) because the subproblems (\ref{eq:operations_sub}) and the variables of the master problem do not change between the least-cost solution and the first MGA iteration. The same logic holds between MGA iterations. Thus, each MGA master problem can be initialized with previously generated cuts, providing additional precomputed information about operational costs as a function of master problem decisions
 to the problem. }
\subsubsection{Cut Sharing}
\paragraph{}{Cut sharing strategies must balance the quality of information retained with increasing master problem size. We describe three potential strategies here, with testing in Section \ref{sec:numexp}. These are:
\begin{enumerate}
    \item Retain cuts from the least-cost solution only between MGA iterations, forget all other cuts generated.
    \item Retain all cuts from all MGA iterations and the least-cost solution.
    \item Retain the first $n$ cuts generated between the least-cost and early MGA iterations.
\end{enumerate}
We selected these strategies to show the tradeoff in sharing cuts - too many and the master problem will become too slow, too few and the decomposition may need to run many more iterations. Please note that many other strategies are possible. An important area prime for future work is determining optimal cut sharing/pruning strategies.}
\subsubsection{Objective Partitioning}
\paragraph{}{To maximize the usefulness of sharing cuts between MGA iterations, we explore a strategy to partition the exploration of the capacity space represented in the master problem. As discussed in Section \ref{sec:BendersMGA}, Benders cuts form piecewise-linear approximations of the subproblem cost. Recall the standard Benders cut formulation presented in \ref{eq:app_master}. Each Benders cut is equivalent to the subproblem cost when the optimal capacity decisions from the master problem (\ref{eq:app_master}) or (\ref{eq:app_least_cost_master}) are equivalent to the set of capacity decisions which generated the cut. When the deviation between the new optimal solution of the master problem, $\bm{x}^*$ and the set of decisions which generated the cut in question increases, we expect the approximation of subproblem costs in the master problem  to become less accurate.}
\paragraph{}{As a result of this relationship, the piecewise approximation \eqref{eq:piecewise_app} of subproblem costs as a function of capacity is most accurate in heavily sampled regions of the capacity space. In least-cost settings, it is difficult to take advantage of this property of piecewise-linear approximations of convex functions. In the MGA context, however, we have a means to direct the search of the capacity space: the objective coefficient vector. Thus, we can partition the set of MGA vectors by directionally clustering them using a clustering algorithm, then run each of the resulting clusters in separate algorithm instances. As we will demonstrate in section \ref{sec:numexp}, objective partitioning with cut sharing between MGA iterations offers computational advantages.} 
\paragraph{}{Observe that this does requires some additional computation before running each individual CGA instance. Naively creating a set of MGA vectors ahead of time then clustering them is only possible with some vector generation based MGA methods such as Random Vector, Variable Min/Max, or some combination thereof \cite{Lau2024, Berntsen2017,Trutnevyte2013}. Implementing a similar objective partitioning method will also work for geometrically derived methods, such as Modelling All Alternatives but will require some additional algorithmic creativity \cite{Pedersen2021,Grochowicz2023}.}

\section{Numerical Experiments} \label{sec:numexp}
\paragraph{}{In this section, we present several numerical experiments to evaluate the application of the Cutting-Plane to Generate Alternatives (CGA) Algorithm~\ref{alg:MGA} on electricity system capacity expansion models with different sizes and levels of computational complexity.}
\paragraph{}{First, we introduce electricity system capacity expansion models, the class of model we test the algorithm on in subsection \ref{subsec:cem}. Then in subsection \ref{compsetup}, we describe the computational setup used to conduct the testing. In subsection \ref{tests}, we describe the testing regime carried out to characterize the scaling of cutting-plane and monolithic MGA runtime with cores and zones, and the results of those. We also describe and share results from tests characterizing how cutting-plane runtime scales with the number of subproblems, and tests comparing cut-sharing strategies. Finally, we share brief results comparing monolithic and cutting-plane algorithm performance on problems with mixed-integer investment variables.}
\subsection{Generating Alternatives for Electricity System Capacity Expansion Models}\label{subsec:cem}
\paragraph{}{We specifically demonstrate the application of the CGA Algorithm~\ref{alg:MGA} to electricity system capacity expansion models. Capacity Expansion Models (CEMs) are optimization problems which optimize the planning and operational decisions of an electricity system over a given time period, typically a year, modeled with hourly resolution. CEM decision variables include investment decisions for different generation technologies, storage, and transmission in each zone, charging and discharging of storage, unit commitment decisions for thermal power plants, dispatch quantity decisions for all generators, and energy flows between regions in each operational timestep. CEMs typically include constraints representing the technical characteristics of the system including power balance requirements, transmission constraints, ramping constraints, storage charge and discharge rates, hydropower streamflow constraints, and policy constraints when desired. CEMs are generally solved with a system-cost minimization objective, which is equivalent to maximizing social welfare for the whole system assuming perfect competition, complete markets without market failures, and rational actors. }
\paragraph{}{The MGA formulation and methodology presented in Section~\ref{Sec2} apply to problems that have the same structure as Problem~\eqref{eq:LP}, e.g., planning decisions that must be co-optimized with numerous operational decisions over independent operational periods. Previous works~\textcite{Jacobson2024} and~\textcite{Pecci2024} have shown that electricity system capacity expansion models belong to this class. In fact, they can be written as Problem~\eqref{eq:LP} by representing time-linking constraints between operational periods (e.g., weeks) through coupling variables that are included in the vector of planning decisions $\bm{y}$. Considered linking constraints include policy constraints (e.g., CO$_{2}$ emission caps) and multi-day energy storage resources (e.g., hydropower reservoirs). By leveraging these reformulations, we apply Algorithm~\ref{alg:MGA} to perform MGA on capacity expansion models with high temporal resolution and engineering detail, limiting the use of abstractions to reduce problem size. The resulting parallelized MGA algorithm is capable of achieving unprecedented computational performance for models with this size and level of detail.}
\subsubsection{Test cases} \label{models}
\paragraph{}{To test the efficacy of the cutting-plane algorithm presented in this paper, we carried out a set of numerical experiments using the GenX open-source electricity system planning model \cite{Jenkins2017}. We compute MGA solutions to 3, 7, 12, 16, 22, and 26 zone models of the contiguous United States as generated by the open-source data compilation software PowerGenome \cite{Schivley2021}. All models consider 52 operational weeks with hourly resolution (8736 total time steps) and linearized unit commitment. Table \ref{tab:Tab1} shows the size of each test problem.}
\begin{table}[H]
    \centering
    \caption{Example System Descriptions} \label{tab:Tab1}
\begin{tabular}{SSSS}

     {Zones} & {Generator Clusters} & {Variables ($\times 10^6$)} & {Constraints ($\times 10^6)$}\\ \midrule 
    3 & 214 & 3.7 & 6.5  \\
    7 & 485 & 8.6 & 15.3 \\
    12 & 796 & 14.3 & 25.6 \\
    16 & 970 & 18.7 & 33.4 \\
    22 & 1381 & 25.7 & 45.8  \\
    26 & 1613 & 30.1 & 53.7\\
\end{tabular}
\end{table}
\paragraph{}{In all tests except for iteration scaling, each least-cost problem is solved once and each MGA problem is solved for 16 MGA iterations (i.e., generating 16 alternative, near optimal, feasible solutions). We arbitrarily chose to use 16 MGA iterations as this resulted in a sufficient number of problems to understand performance characteristics, without exceeding the computational time limits of our computer cluster. }
\paragraph{}{In the comparison between monolithic MGA and the cutting-plane algorithm, all MGA vectors were generated using the combination method described in \textcite{Lau2024}, with zonally aggregated Variable Min/Max runs making up 75\% of all runs and Random Vector runs making up the rest. Please note that individual MGA iterates may be more or less difficult to solve than one another due to semi-randomly generated cost vectors.}
\subsection{Computational Setup}\label{compsetup}
\subsubsection{Language, Processors, and Solver}
\paragraph{}{All algorithms and optimization problems are implemented via Julia 1.9.1 \cite{Bezanson2017}. Optimization solvers are called through JuMP 1.18.1 \cite{Lubin2023}. All problems are solved on the Della computer cluster at Princeton University on 2.8GHz Cascade Lake Processors. Due to queue constraints on the cluster, we consider a problem intractable if it takes longer than 72 hours wall-clock time or takes more than 1.2 TB of memory.}
\paragraph{}{All problems are solved using Gurobi (v10.0.1) using the barrier method \cite{GurobiOptimizationLLC2024}. Optimality tolerance is set to $1\times10^{-3}$ . For the monolithic and cutting-plane algorithm master problems, crossover is turned off, as they do not require basic solutions~\cite{Jacobson2024}. If the feasible solution returned contains negative capacity values, we re-optimize with crossover turned on. Subproblems are solved with crossover on since cut generation necessitates basic solutions.} 
\subsubsection{Monolithic Parallel Setup}
\paragraph{}{Monolithic parallel problems are set up to solve MGA iterates in parallel. Several setups were tested initially on the 3-zone problem, each with 64 cores distributed evenly between $m$ processes. In preliminary testing, we found that a setup with 4 parallel MGA iterates solved by a Gurobi instance using 16 cores offered the best combination of speed and memory usage. The least-cost solution was solved by a Gurobi instance using 16 cores.}
\subsubsection{Cutting-Plane Setup}
\paragraph{}{The cutting-plane algorithm is set up to take full advantage of the decoupled subproblems. As such, when the time domain is divided into $|P|$ subproblems, we allocated $|P|$ cores. As the time domain of these problems spans 8736 hours (52 weeks), we allocate 52 cores, with one core allocated per sub problem. At each algorithm iteration, we wait for all subproblems to finish then return cut information. The master problem solver uses up to 32 cores (automatically determined by the solver), allocated from the subproblems, while the subproblems wait for the next master problem to solve. All problems were solved on single computing nodes. In the case of the cutting-plane algorithm, all MGA iterates are run in series, though we note that it is possible and advantageous to spawn multiple instances of this algorithm in a similar way as the parallelized monolithic implementation, assuming a sufficient number of CPUs are available. The initial least-cost solution is generated using the regularized Benders decomposition algorithm presented in \textcite{Pecci2024}.}
\subsubsection{Cut Sharing and Objective Partitioning Setup}
\paragraph{}{To perform the objective partitioning tests presented in section \ref{compcutshare} we used the Variable Min/Max MGA method \cite{Trutnevyte2013}, since using the combination method used in the main Monolithic/CGA comparison tests here may result in unreasonable comparisons between clusters (i.e. if some clusters of vectors are all integer weights while others are real numbers). To generate the list of clustered vectors, we first generated 320 MGA vectors. We then clustered them using k-means clustering as implemented in Clustering.jl \cite{AlexeyStukalov2024}. This generated 6 clusters over the course of 100 iterations of the clustering algorithm. We then selected one of the resulting clusters and used the first 16 vectors from the cluster as the set of MGA vectors for each CGA instance.}
\subsection{Testing and Results} \label{tests}
\paragraph{}{To compare the algorithms tested here, we define four  metrics. \textit{MGA solution time} is defined as the average wallclock time it takes to solve a given MGA problem for a given method and setup. \textit{Runtime} is defined here as the total time required to initialize, solve, and write results from a given model, including least-cost and 16 MGA iterates. \textit{Memory} is the total memory allocation required to initialize, solve, and write results for the set of MGA iterates. }
\subsubsection{Zonal Scaling Tests}\label{ZoneScaling}
\paragraph{}{\textcite{Jacobson2024} has shown that CEM error decreases as objective resolution increases. Therefore zonal scaling of the CGA algorithm is of core concern. For this test, we run all models described in Table \ref{tab:Tab1} with both cutting-plane and the parallelized Monolithic MGA formulation. Each problem is run for 5 sets of 16 MGA iterations, resulting in a sample size of 80 MGA iterations run for each problem formulation. Please note that for these tests, all CGA runs used the least-cost cut sharing method (e.g. cuts from the least-cost solution are passed to each MGA iterate). See Table \ref{tab:zone}.}
\begin{table}[H]
    \centering
    \caption{Testing Regime: Zone Scaling}
    \label{tab:zone}
    \begin{tabular}{SSSSS}
        {Method} & {Cores} & {Parallel Subproblems} & {Parallel MGA iterates} & {Zones} \\
        \midrule
        {Cutting-Plane} & 32 & 32 & 1 & 3 \\
        {Cutting-Plane} & 56 & 56 & 1 & 7 \\
        {Cutting-Plane} & 56 & 56 & 1 & 12 \\
        {Cutting-Plane} & 56 & 56 & 1 & 16 \\
        {Cutting-Plane} & 56 & 56 & 1 & 22 \\
        {Cutting-Plane} & 56 & 56 & 1 & 26 \\
        {Mono. Par.} & 64 &n/a & 4 & 3 \\
        {Mono. Par.} & 64 &n/a & 4 & 7 \\ 
        {Mono. Par.} & 64 &n/a & 4 & 12 \\ 
        {Mono. Par.} & 64 &n/a & 4 & 16 \\
        {Mono. Par.} & 64 &n/a & 4 & 22 \\ 
        {Mono. Par.} & 64 &n/a & 4 & 26 \\ 
    \end{tabular}
\end{table}
\paragraph{}{Results from zonal scaling tests are presented in Figure \ref{fig:ZoneScaling} and average results are presented in Table \ref{tab:ResultsZone}. In Table \ref{tab:ResultsZone}, the algorithm with the best performance in each metric has their value bolded. }
\begin{figure}[H]
    \centering
    \includegraphics[width=\linewidth]{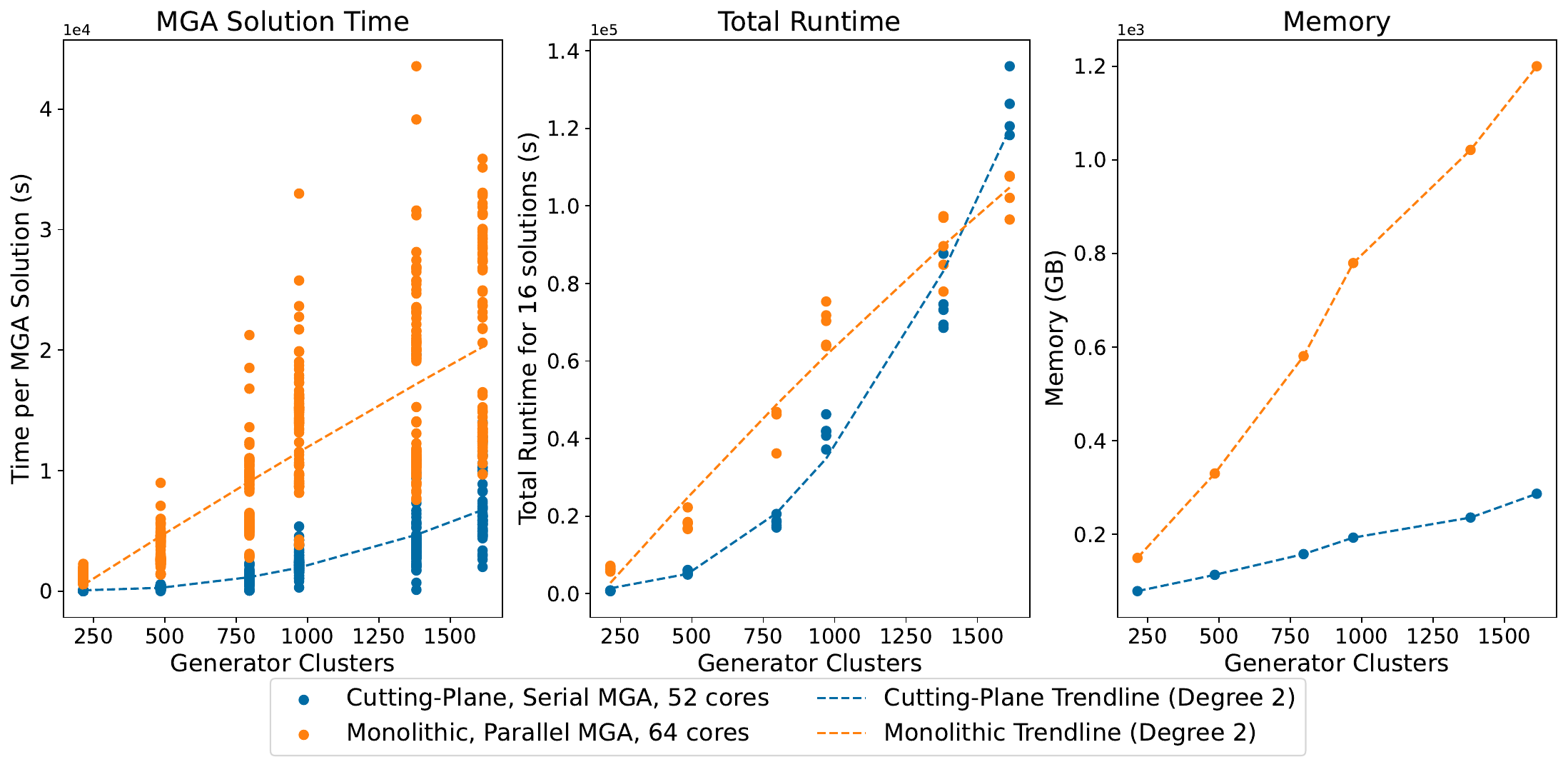}
    \caption{Scaling of 64-core parallelized monolithic MGA and 52-core cutting-plane algorithm average time per MGA solution and total runtime with zones. Each problem is run for a least-cost solution and 16 total MGA solutions. Runs tabulated in Table \ref{tab:zone}. }
    \label{fig:ZoneScaling}
\end{figure}
\begin{table}[H]
    \centering
    \caption{Results: Zonal Scaling}
    \label{tab:ResultsZone}
\begin{tabular}{ll|lll|lll}
     \multicolumn{2}{c|}{\textbf{Case}} & \multicolumn{3}{c|}{\textbf{Monolithic}} &\multicolumn{3}{c}{\textbf{CGA}} \\
    Zones & Gen. & Sol. Time & Runtime & Mem. & Sol. Time & Runtime & Mem. \\
    (\#) & (s) & (s) & (GB) & (s) & (s) & (GB) \\
    \midrule
    3 & 214 & $1190$ & $6469$ & $149$ & \textbf{34} & \textbf{748} & \textbf{78}\\
    7 & 485 & 3531 & 18476 & 330 & \textbf{303} & \textbf{5472} & \textbf{113} \\
    12 & 796 & 7765 & 43094 & 581 & \textbf{1049} & \textbf{18490} & \textbf{158}\\
    16 & 970 & 13566 & 69097 & 779 & \textbf{2364} & \textbf{41593} & \textbf{193} \\
    22 & 1381 & 16940 & 89329 & 1021 & \textbf{4276} & \textbf{74690} & \textbf{236}\\
    26 & 1613 & 20112 & \textbf{103458} & 1200 & \textbf{7014} & 125287 & \textbf{287}\\
\end{tabular}

\end{table}
\textit{Notes:} All values are means. The better value for each metric is in bold.
\paragraph{}{Our testing reveals that the CGA algorithm solves individual linear MGA iterates significantly faster than a monolithic formulation. In comparison to previous results from Benders Decomposition for least-cost problems, we note that this is a substantial change: \textcite{Jacobson2024} demonstrated that a similar Benders Decomposition scheme is faster for MILPs, but is outperformed by commercial solvers for LPs. However, the advantage conferred by CGA decreases as spatial resolution increases. Specifically, while CGA solves individual 3-zone cases 34 times faster than the monolithic implementation on average, it only solves the 26-zone case 2.8 times faster on average. This is broadly consistent with results reported in \textcite{Jacobson2024}, which showed that the monolithic implementation can scale better when solving LPs than temporal Benders Decomposition with respect to increasing spatial resolution.}
\paragraph{}{While the CGA algorithm is much faster than the monolithic formulation at solving problems in series, monolithic problems can perform well on total runtime when parallelized. In particular, the monolithic problem tends to scale better than CGA with increasing spatial resolution. When solving for 16 total MGA solutions on our smallest cases, CGA is approximately 8.5 times faster than the parallelized monolithic implementation tested here. Yet, in the most spatially resolved case, the 26-zone case, the parallelized monolithic implementation outperforms serialized CGA. This is largely because CGA as currently implemented scales poorly with spatial dimension, as the decomposition method used herein currently applies only to the temporal dimension of the problem. However, as demonstrated by \textcite{Parolin2025}, it is possible and beneficial to apply the same decomposition algorithm to a spatially decomposed formulation, which can improve spatial scaling characteristics. The CGA algorithm can be applied essentially as-is to the spatial decomposition approach in \textcite{Parolin2025}, although it may require some additional heuristics to handle excessive numbers of cuts. We recommend this as an area of future work. It is important to note that instances of CGA can also be parallelized, provided enough cores are available, though such an arrangement is not tested explicitly here.}
\paragraph{}{Memory usage is another core consideration, as memory limitations often constrain the scale of macro-energy systems planning models before runtime limitations become binding. MGA implementations in particular often become memory-constrained as they solve problems in parallel, increasing their computational demands at any given time. We find that each sequential cutting-plane algorithm implementation (including master problem, subproblems, and overhead) uses roughly the same amount of memory as an individual monolithic implementation. However, due to its increased speed, the cutting-plane algorithm has much better practical memory performance than parallelized monolithic MGA. To achieve the same throughput of problems as one serial implementation of CGA, for instance, one would need to solve four instances of the monolithic MGA problem in parallel, thus requiring four times more memory. }
\subsubsection{Cut Sharing Methods Tests}
\label{compcutshare}
\paragraph{}{To compare cut-sharing and objective partitioning methods, we tested all methods described in section \ref{sec:CutSelection} on the 26-zone, 8736-hour model, with access to 52 cores. Each of the cut sharing methods was run for 80 MGA iterations (5 sets of 16 iterations), while the no-cut sharing counterfactual was run for 13 MGA iterates due to long runtimes. Method three, which retains the first $n$ cuts, was run with $n=11000$, which was arbitrary but performed the best for our problem in simple preliminary tests comparing performance for algorithms with $n \in [7000, 9000, 11000, 13000, 15000]$. This corresponds with the cuts from the first 200 or so Benders iterations. For the 26-zone model, the least-cost problem converges after about 48 iterations, and the following MGA iterations take between 10 and 100 iterations depending on the objective vectors selected, with the mean around 30. Thus, the 11000-cut version is sharing cuts from roughly the first 5 MGA iterations to initialize the remaining problems. Developing improved methods to select which cuts to share and which to delete is a fruitful area for future study. We ran method three twice, once with objective partitioning and once without, to demonstrate the value of objective partitioning. The tests described here are summarized in Table \ref{tab:cutandspace}}
\paragraph{}{The objective partioning is carried out by pre-generating 320 MGA vectors, clustering them into 6 sets using k-means clustering as implemented in the Clustering.jl package \cite{AlexeyStukalov2024}, run for 100 iterations. We then used the first 16 vectors from the first cluster as the set of MGA vectors for each test run. In real-world applications, we suggest pre-generating and clustering a large set of MGA vectors, then divying them among parallel CGA implementations.}
\begin{table}[H]
    \centering
    \caption{Testing Regime: Cut Sharing and Objective Partitioning}
    \label{tab:cutandspace}
    \begin{tabular}{SSSS}
        {Cut Sharing Method} & {Label} & {Objective Partitioning} & {MGA iterates} \\
        \midrule
        {None} & {None} & {No} & 13 \\
        {All Cuts} & {All} & {No} & 80 \\
        {Least-Cost Cuts} & {LC} & {No} & 80 \\
        {First 11000 Cuts} & {11000, NOP} & {No} & 80 \\
        {First 11000 Cuts} & {11000, OP} & {Yes} & 80\\
    \end{tabular}
\end{table}
\paragraph{}{In the previous subsection, we showed that the base CGA method sharing cuts only from the initial least-cost problem solution can outperform a parallelized monolithic formulation with industry-standard solver on linear MGA iterates. Here we share results of testing described in Section \ref{sec:CutSelection} aimed to demonstrate the benefits of cut-sharing between MGA iterates and of objective partitioning of the MGA objective vectors to accelerate CGA. Graphical results are presented in Figure \ref{fig:CutConvBox}. Median numerical results are presented in Table \ref{tab:ResultsCuts}. Please note that the method with the best performance in each metric has that value in bold text.}
\begin{figure}[H]
    \centering
    \includegraphics[width=\linewidth]{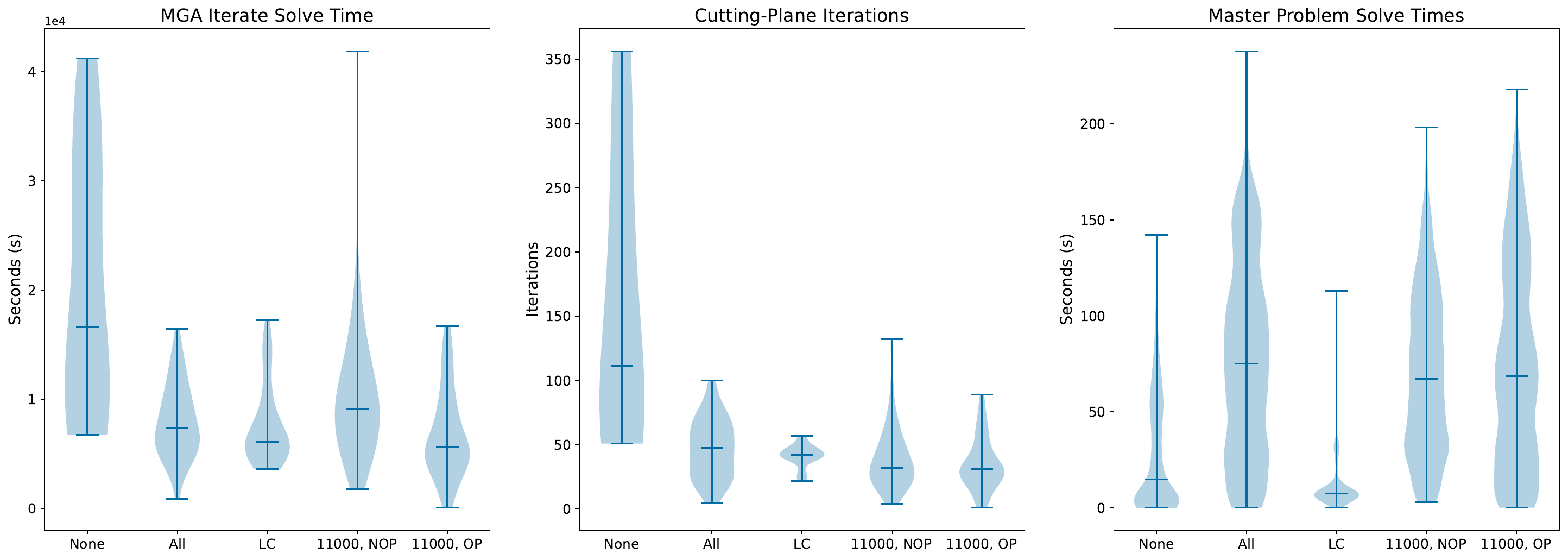}
    \caption{Comparison of cut sharing methods. From the left, plot 1 shows the distribution of runtimes to solve one MGA problem for each method, plot 2 shows the distribution of cutting-plane iterations required to solve one MGA problem for each method, and plot 3 shows the distribution of master problem solve times for each method. Note that horizontal lines show maximum, minimum, and median values for each violin. The objective partitioned, first-\textit{n} cuts method with 11000 cuts performs best in solution time and iterations required.}
    \label{fig:CutConvBox}
\end{figure}
\begin{table}[H]
    \centering
    \caption{Results: Cut Sharing and Objective Partitioning}
    \label{tab:ResultsCuts}
    \begin{tabular}{c|ccccc}
       {Metric} & None & All & Least-Cost & 11000, NOP& 11000, OP\\
       \midrule
       Sol. Time (s) & 16619 & 7380 & 6136 & 9086& \textbf{5633}\\
       Iterations & 112 & 48 & 42 & 32& \textbf{31}  \\
       Master Sol. Time (s) & 15 & 75 & \textbf{7} & 70& 69 \\
    \end{tabular}
\end{table}
{\textit{Notes:} All values are medians. The value of the algorithm with the better value for each metric is bolded.}
\paragraph{}{Cut sharing is shown to have a beneficial effect on the solve time of MGA iterates, reducing median solve time relative to the no-cut control in all test cases. Generally, we also show that sharing cuts reduces the frequency of long solve times and increases the frequency of very short solve times. However, it is not clear that naively sharing large numbers of cuts between MGA iterates is beneficial, as sharing all cuts (labeled "All'' in figures/tables) does not outperform the strategy sharing only the cuts from the initial least-cost solution (labeled "Least-Cost") in either solve time or iterations to convergence. Sharing cuts also does not exclude the existence of long solve time outlier problems, although it does decrease their likelihood, as shown in Figure \ref{fig:CutConvBox}. We note that, as hypothesized, strategies that increase the number of cuts retained result in an increase in master problem solve time, as the additional cuts expand the size of the master problem. Thus, cut-sharing strategies must balance a reduction in overall iterations to converge against an increase in master problem solve time per iteration. In general, we find the most performant cut-sharing strategy tested herein was a strategy sharing only the first 11,000 cuts with objective partition by clustering similar MGA vectors to be solved sequentially with the same cuts (labeled "11000, OP"). This method shares both the least-cost cuts and a set of initial cuts from the first MGA iteration, which provides improved information to later MGA iterates limiting the increase in master problem size. Without objective partitioning, however, the Least-Cost cut sharing methodology performed the best in terms of overall runtime, as it has very short master problem solve times which offset the additional iterations required to converge relative to the 11000 cut version (labelled "11000, NOP"). }
\paragraph{}{Partitioning MGA search vectors is shown to be beneficial for the CGA algorithm when using the first-\textit{n} cuts strategy. Importantly, partitioning is shown to dramatically decrease the minimum time to solve an MGA iterate beyond solely initializing each master problem with previous cuts. The partitioned case shows significantly more MGA iterates solving in under 20 iterations and in under 1000 seconds than the non-partitioned case for this problem, supporting the hypothesis that cuts generated by MGA iterates with similar objective vectors are more effective at accelerating convergence of the CGA algorithm. Similarly, the max number of iterations required for the algorithm to converge is lower in the partitioned case than in the control case.}
\paragraph{}{These results are achieved with a relatively simplistic MGA vector clustering method (k-means clustering), and a relatively small number of initially generated MGA vectors (320), which limits the potential efficacy of the partitioning approach since the initial 320 vectors may not be very clustered in the high-dimensional MGA space. Furthermore, we do not intelligently drop cuts from the master problem during the cutting-plane iterations, resulting in excess cut sharing and larger master problems. Exploring other clustering/partitioning methodologies and improved methods for selecting cuts to retain will be valuable avenues of future work in this area.}
\paragraph{}{To visualize the differences in performance characteristics between the cut sharing strategies tested herein, we selected a representative run of each method to plot here. Please note that for the method sharing all cuts, this was the final iteration of the 16 MGA iterates, where the master problem was fully initialized with computed cuts from the prior 15 MGA iterations plus the initial least-cost solution. This case provides the best starting information but results in significantly longer master problem solve times. The results are presented in Figure \ref{fig:CutConv}.}
\begin{figure}[H]
    \centering
    \includegraphics[width=\linewidth]{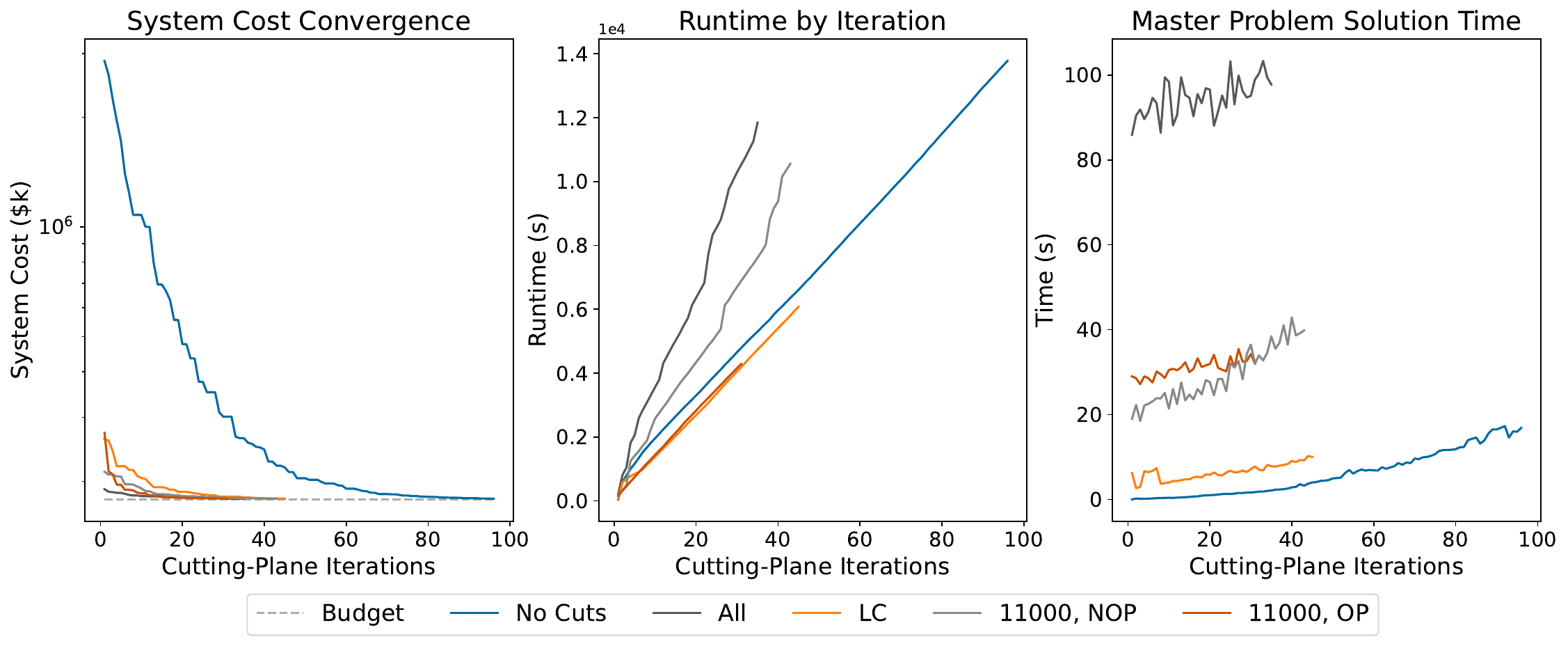}
    \caption{Comparison of algorithm convergence, runtime, and master problem solve time characteristics for a single MGA solution for all included cut sharing and objective partitioning methodologies. From the left, plot 1 shows convergence as measured by system cost declines, plot 2 shows runtime as a function of cutting-plane iterations, and plot 3 shows the progression of master problem solve times with cutting-plane iterations. We note that all methods which share cuts from previous iterations have meaningful advantages in initial cost gap estimates and runtime, but take longer to solve  master problems.}
    \label{fig:CutConv}
\end{figure}
\paragraph{}{Figure \ref{fig:CutConv} highlights that sharing least-cost cuts dramatically improves initial master problem solutions, as measured by \text{UB} budget violation. On average, improved early master problem solutions allow for convergence in less than half of the iterations of the control case without cut sharing, though individual problems vary. Despite having significantly better initial information, sharing all cuts takes nearly as many iterations to converge (in this case) as the least-cost cut sharing method, indicating there is not much improvement to be gained by incorporating significantly more cuts. We hypothesize that this is because the value of retained cuts decreases as the number of retained cuts increases. The combination of limiting cut numbers and objective partitioning by clustering similar vectors used in the third method with 11000 cuts does seem to show some benefit, as it has more limited master problem solution times than the all-cuts method and converges more quickly than the least-cost cuts method.}
\subsubsection{Integer Investment Decisions}
\paragraph{}{The testing in the prior two subsections is carried out on linear programs. All current large-scale MGA modelling is carried out using exclusively linear problems since it is computationally intractable to monolithically solve large MGA problems with mixed integer formulations. Mixed-integer problems are significantly more computationally intensive than linear problems as a class, and that difficulty is only increased by the addition of tight budget constraints and the requirement of many problem solves in the MGA context. }
\paragraph{}{In this subsection, we provide indicative testing of the computational performance of CGA on the previously described 26-zone model with integer investment constraints enabled. The resulting model is identical to the model described in Table \ref{tab:Tab1}, but with 2467 integer decisions in the master problem. We run the 26-zone problem for a least-cost solution and 9 MGA iterations with CGA using the least-cost cuts cut sharing methodology to provide some indicative runtime data. We attempted to run the same model monolithically with Gurobi with 32 cores, but the least-cost problem was unable to converge within a 48-hour limit. The resulting data are reported in Table \ref{tab:Integer}.}
\begin{table}[H]
    \centering
    \caption{Results: 26-zone Integer Problem solved with CGA}
    \label{tab:Integer}
    \begin{tabular}{c|cc}
    
    \textit{Metric} & Mean Sol. Time (s) & Iterations \\
    \midrule
    \textit{Mean} &  13534 & 49.3  \\
    \textit{Max} & 23261 & 70 \\
    \textit{Min} & 4975& 39 \\
       
    \end{tabular}
\end{table}
\paragraph{}{For comparison, the CGA algorithm solved the integer problem presented here faster (13534 seconds) on average than Gurobi solved the monolithic linear relaxation of the problem (20112 seconds).}
\section{Conclusion}\label{conc}
\paragraph{}{For more than a decade, MGA has been mentioned as an important method for exploring uncertainty in stakeholder objectives and problem structure and incorporating the messiness of the real-world in optimization problems through systematic exploration of near-optimal feasible solutions \cite{DeCarolis2017, DeCarolis2011, Trutnevyte2016}. By exploring the near-optimal feasible space, MGA could potentially provide decision-makers and stakeholders with a fuller range of within-budget portfolios, allowing a better understanding of the options available to them and therefore improving decision-making. Despite these strengths, MGA has not been widely adopted. The computational intensity of the method has to date been a major obstacle to the adoption of MGA. As we demonstrated in subsection \ref{mono}, MGA iterates are individually harder to solve than least-cost problems of the same dimensionality, and hundreds of MGA iterates must typically be solved to generate a suitably extensive exploration of the range of alternative near-optimal solution \cite{Lau2024}. Hence, they become computationally intractable unless spatial and temporal resolution is significantly reduced and integrality constraints relaxed relative to what one would run in a least-cost problem for the same case study. Computational acceleration of the MGA algorithm for large-scale energy system planning models is therefore an important step as computational intensity has limited MGA adoption in real-world applications.}
\paragraph{}{In this paper, we introduce a Parallelizable Cutting-Plane Algorithm to Generate Alternatives, a tailored, parallelizable algorithm with three cut-sharing techniques and a objective partitioning methodology for MGA problems in capacity expansion models. We prove the equivalence of the cutting-plane formulation, explain the algorithm, and introduce initialization techniques that share cuts from prior solutions to accelerate convergence in Section \ref{Sec2}. We then compare the cutting-plane algorithm to monolithic MGA in a series of numerical tests in Section \ref{sec:numexp}. We demonstrated that CGA is faster and more memory efficient than existing monolithic MGA implementations, finding that our cutting-plane algorithm replicates the results of monolithic MGA, but solves linear MGA iterates at least 2.8 times faster on an individual solution basis than monolithic MGA in high temporal and zonal resolution capacity expansion model applications. We demonstrate that initializing the master problem with cuts from previous MGA iterations and partitioning the MGA objective vector space are advantageous for our cutting-plane algorithm. Finally, we show that our cutting-plane algorithm makes solving large, spatially and temporally-detailed mixed integer MGA problems possible. The results presented herein suggest that the Cutting-Plane to Generate Alternatives (CGA) algorithm can enable the use of MGA for case studies with much higher spatial and temporal resolution and more realistic investment decision representation than was previously possible, limiting aggregation and approximation errors compared to existing solution techniques.}
\paragraph{}{This work makes the following four contributions:}
\begin{enumerate}
\item \textit{Parallelizable Cutting-Plane Algorithm to Generate Alternatives}: We developed a modified Benders decomposition algorithm convergence criteria to adapt Benders decomposition to MGA style problems, where cuts are used to approximate compliance with the MGA budget slack constraint, rather than the objective function. We find that this algorithm is faster and more memory-efficient than existing monolithic MGA algorithms on both linear and mixed-integer problems.
    \item \textit{Proof of equivalence of Cutting-Plane and monolithic MGA formulations}: We prove that monolithic MGA formulations, including the budget constraint, can be equivalently reformulated into a master and subproblem for use in a decomposed cutting-plane algorithm. We demonstrate that the cutting-plane algorithm implemented returns identical solutions to an equivalent monolithic formulation.
    \item \textit{Cut Sharing and Objective Partitioning}: We develop the concept of sharing cuts between problems which only vary in master problem objective. Furthermore, we introduce a method to partition sets of MGA search vectors to maximize the computational advantage of shared cuts. We then demonstrate that sharing cuts between problems and partitioning objective vectors provides computational benefits.
\end{enumerate}
\paragraph{}{There are several avenues for future work based on this contribution. First, we did not explore methods to determine which cuts to share beyond simple heuristics. Improved cut valuation methods would be of great benefit to this and other Benders decomposition-based algorithms. Second, other methods of clustering or partitioning MGA vectors could be valuable. While this work is focused on the development of the Cutting-Plane to Generate Alternatives algorithm in electricity system capacity expansion models, this algorithm is also broadly extensible to planning models with strict operational detail (e.g. multi-sectoral energy models) which solve problems repeatedly with changes limited to investment-related variables, coefficients, and constraints as they share much of the same problem structure. }

\printbibliography

\end{document}